\newtheorem{thm}{Theorem}[section]
\newtheorem{defi}[thm]{Definition}
\newtheorem{prop}[thm]{Proposition}
\newtheorem{lemma}[thm]{Lemma}
\newtheorem{rmk}[thm]{Remark}
\DeclareMathAlphabet{\pazocal}{OMS}{zplm}{m}{n}
\newcommand{\A}{\textbf A}
\newcommand{\K}{\textbf K}
\newcommand{\In}{\mathrm In} 
\newcommand{\rr}{\mathcal{R}_\sigma}
\newcommand{\R}{\textbf R}
\newcommand{\x}{\textbf x}
\newcommand{\y}{\textbf y}
\newcommand{\Z}{\textbf Z}
\newcommand{\J}{\mathcal J}
\newcommand{\M}{
\mathcal{M}}
\newcommand{\spec}{\textrm{Spec}\,}
\newcommand{\tr}{\textrm{Trop}\,}
\newcommand{\Supp}{\textrm{Supp}\,}
\providecommand{\keywords}[1]
{
  \textbf{\textit{Keywords--}} #1
}
\providecommand{\subjclass}[1]
{
  \textbf{\textit{2020 Math. Sub. Class--}} #1
}
\title{Groebner fan and embedded resolutions of ideals on toric varieties}
\author{F. Aroca, M. G\'omez-Morales, H. Mourtada }
\date{February 2022}
\begin{document}

\maketitle

\begin{abstract}
\noindent We consider the notions of Groebner fan and Newton non-degeneracy for an ideal on a toric variety, extending the two existing notions for ideals on affine spaces. We prove, without assumptions on the characteristic of the base fields, that the "Groebner fan" of such an ideal is actually
a polyhedral fan and that a sub-variety defined by a Newton non-degenerate ideal on a toric variety $X_\sigma$ admits a toric embedded resolution of singularities $Z\longrightarrow X_\sigma.$ 
\end{abstract}

\keywords{Embedded resolution of singularities, Newton non-degenerate ideals, Groebner fan, Toric varieties, Tropical Geometry.}

\subjclass{14B05, 14M25, 13F65, 14T90.}
\section{Introduction}

Resolution of singularities is a fundamental tool in Algebraic Geometry, which also has applications 
in several other domains like Differential Geometry, Analysis and Number Theory. While its existence
when working over fields of characteristic zero was proved by Hironaka in 1968 \cite{Hi}, in positive characteristics this (and its super local couterpart: local uniformization) remains a widely open problem, \textit{e.g.} \cite{CP,CP2,BV,Surf,KFV,T1,Cudim3,CuM,NoS,Sa}.\\

In characteristic zero, a huge amount of work has been done to simplify Hironaka's proof and to make it functorial \cite{V,BM1,EH}. The majority
of this work follows one philosophy which is about making "very" small improvements to the singularity  
by blowing up smooth centers. See also \cite{Mc,ATW} for resolution of singularities using a sequence of weighted blowing ups.\\

In this article, we are concerned with toric resolutions of singularities which, when they exist, are obtained
with one toric morphism. Such resolutions are then very useful in applications and allow to effectively compute
many important invariants of singularities, \textit{e.g.} \cite{Va,DL2,AGL,LMS,RDP,MP,CPPT}. 
For hypersufaces in affine spaces, the so called condition of being Newton non-degenerate, introduced in the seventies by Khovanski, Kuchnirenko and Varchenko \cite{Kou,Va}, guaranties the existence of toric resolution of singularities. Recently, this was generalized to higher codimensions in \cite{Te1,AGS} via the equivalent notions (but expressed in different languages) of Schon varieties and Newton non-degenerate ideals.\\ 

In this article, we extend this last notion to  ideals on toric varieties and prove that their associated varieties admit toric embedded resolutions of singularities
in a sense that will be explained below. Here, it is worth noticing that given a sub-variety $Y$ of a normal toric variety 
$X_\sigma,$ in general there exists no toric embedded resolution $Z\longrightarrow X_\sigma$ of $Y\subset X_\sigma;$
this can already be noticed for curves embedded in the plane \cite{GT,dFGPM,LMR,gen}. But there is a conjecture of Teissier that a toric  resolution exists after embedding in a higher dimensional affine space \cite{T1,gen,hdr,GP1}. We insist here that in the context of this article the embedding is fixed.  We now explain the results of this article; the reader who is unfamiliar with toric geometry
may find a reminder on this subject in section 2.\\

Since the problem is of local nature, we will be considering affine normal toric varieties. Let $\K$ be a perfect field. Let $N$ be a free abelian group of rank $n,$ \textit{i.e.} $N\simeq \Z^n.$  Let $M$ be the dual lattice of $N,$ 
$N=\mathrm{Hom}_{\textbf{Z}}(M,\textbf{Z})$ and let $\sigma \subset N_{\R}=N \otimes_{\Z} \R$ be a strongly convex rational polyhedral cone (\textit{i.e} $,\sigma \cap -\sigma=\{0\}\$).$ Let $\sigma^{\vee} \subset M_{\R}=M \otimes_{\Z} \R$ be the dual cone of $\sigma;$ then the normal toric variety associated with $\sigma$ is by definition  $X_{\sigma}=\spec{\rr}$ where $\rr=\K[\sigma^{\vee}\!\cap M];$ 
note that by definition of $X_\sigma,$ the torus $\textbf{T}=\spec{\K[M]}$ is dense in $X_\sigma.$

Let $f \in \rr;$ hence we can write 
$$f=\sum_{\alpha \in  \sigma^{\vee}\!\cap M}a_{\alpha}\x^{\alpha},$$
where $\x^{\alpha}=x_1^{\alpha_1}\cdots x_n^{\alpha_n}$. Since $\sigma$ (and hence $\sigma^\vee$) is strongly convex, the expression of $f$ in this form is unique. Note that the support of $f,$ $\mathrm{Supp}(f):=\{\alpha \in \sigma^{\vee}\!\cap M ; a_{\alpha}\not=0\}$ is a finite set. We define the valuation associated with $v\in N_\R$, $\nu_v:\rr \longrightarrow \Z,$ by  
$$ \nu_v(f)=\mathrm{min}\{ v \cdot \alpha~; \alpha \in
\mathrm{Supp}(f) \}~~~~\mbox{for any}~~f\in\rr \setminus \{0\}.$$
Here, $v \cdot \alpha$ stands for the action of $v\in N_\R$ on $\alpha \in M$ being defined by the scalar product; in which we set $\nu_v(0):=\infty.$ The initial form of $f\not=0$ with respect to $v$ is then defined by
$$\In_v(f)=\sum_{\{\alpha \in \mathrm{Supp}(f);v \cdot \alpha= \nu_v(f) \}}a_\alpha \x^{\alpha},$$ and the initial form of $0$ is defined as $0$. Both objects $\nu_v(f)$ and $\In_v(f)$ are well defined, thanks to the uniqueness of the expression of $f$ in $\K[\sigma^{\vee}\!\cap M],$ as mentioned above. Let $\J \subset R_\sigma$ be an ideal. For $v \in \sigma,$ we define the initial ideal $\In_v(\J)$ of $\J$  with respect to  $v$ as follows:
$$\In_v(\J)=\{\In_v(f);f \in \J\}. $$
Notice that $\In_v(\J)$ is an ideal. The ideal $\J$ defines an equivalence relation $\sim$ on the cone $\sigma$ as follows: let $v_1,v_2 \in \sigma;$ then we set

$$v_1\sim v_2~~~\mbox{if}~~~ \In_{v_1}(\J)=\In_{v_2}(\J).$$
We prove the following (see \ref{Groebner}):
\begin{thm}\label{GB} The closures (with respect to the topology of $\R^n$) of the equivalence classes of the relation $\sim$ defines a polyhedral fan which is a subdivision of $\sigma.$

\end{thm}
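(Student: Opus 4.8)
The plan is to deduce Theorem~\ref{GB} from the classical fact that the Gr\"obner fan of a homogeneous ideal in a polynomial ring over an arbitrary field is a complete rational polyhedral fan (Mora--Robbiano, Bayer--Morrison, Sturmfels), by reducing the toric situation to it. First I would choose generators $m_1,\dots,m_s$ of the affine monoid $\sigma^{\vee}\!\cap M$ (Gordan's lemma) and let $\pi\colon\K[\y]=\K[y_1,\dots,y_s]\twoheadrightarrow\rr$ be the surjection $y_i\mapsto\x^{m_i}$, with kernel the toric ideal $P$. Setting $\J'=\pi^{-1}(\J)\supseteq P$ one has $\rr/\J\cong\K[\y]/\J'$, and for $v\in\sigma$ I put $\omega(v)=(v\cdot m_1,\dots,v\cdot m_s)\in\R^{s}_{\ge 0}$; this $\omega$ is linear and, since $m_1,\dots,m_s$ span $M_\R$, injective on $N_\R$.

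The key point is that $\In_{\omega(v)}(\J')=\pi^{-1}\!\bigl(\In_v(\J)\bigr)$ for every $v\in\sigma$. This holds because $v\cdot\!\bigl(\sum u_im_i\bigr)=\omega(v)\cdot u$, so $\pi$ carries the $\omega(v)$-weight filtration of $\K[\y]$ \emph{onto} the $v$-weight filtration of $\rr$ --- every monomial of $\rr$ of $v$-degree $\ge d$ lifts to a monomial of $\K[\y]$ of $\omega(v)$-degree $\ge d$; passing to associated graded rings, which for these monomial algebras are $\K[\y]$ and $\rr$ again, identifies $\K[\y]/\In_{\omega(v)}(\J')$ with $\rr/\In_v(\J)$ compatibly with $\pi$. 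Since $\pi^{-1}$ is injective on ideals, $v_1\sim v_2$ in $\sigma$ iff $\In_{\omega(v_1)}(\J')=\In_{\omega(v_2)}(\J')$. Now let $\widehat{\J}\subseteq\K[y_0,y_1,\dots,y_s]$ be the homogenization of $\J'$ for the standard grading (saturated at $y_0$), a homogeneous ideal; by the standard compatibility of initial ideals with (de)homogenization one has, for $w\in\R^{s}_{\ge 0}$, that $\In_{w}(\J')$ is the dehomogenization of $\In_{(0,w)}(\widehat{\J})$, hence $\In_{w_1}(\J')=\In_{w_2}(\J')$ iff $\In_{(0,w_1)}(\widehat{\J})=\In_{(0,w_2)}(\widehat{\J})$. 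Writing $L\colon N_\R\to\R^{s+1}$, $v\mapsto(0,\omega(v))$ --- linear and injective --- these two equivalences combine to: $v_1\sim v_2$ in $\sigma$ iff $L(v_1),L(v_2)$ lie in the relative interior of a common cone of the Gr\"obner fan $\mathrm{GF}(\widehat{\J})$.

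By the classical theorem, $\mathrm{GF}(\widehat{\J})$ is a complete rational polyhedral fan on $\R^{s+1}$, so its preimage $L^{*}\mathrm{GF}(\widehat{\J})=\{\,L^{-1}(C):C\in\mathrm{GF}(\widehat{\J})\,\}$ is a complete rational polyhedral fan on $N_\R$. By the previous paragraph the equivalence classes of $\sim$ are exactly the sets $L^{-1}(\mathrm{relint}\,C)\cap\sigma$; each is the relative interior of a rational polyhedral cone, so its closure is such a cone, and the collection of all these closures together with their faces is the common refinement of $L^{*}\mathrm{GF}(\widehat{\J})$ with the fan consisting of $\sigma$ and its faces. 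A common refinement of polyhedral fans is a polyhedral fan, its support here is $\sigma$, and it contains the closures of all the equivalence classes: this is the fan claimed in Theorem~\ref{GB}.

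The real work is in the two reduction steps: checking that $\pi$ is strict for the weight filtrations and that initial ideals behave correctly under homogenization and the ensuing $y_0$-saturation, so that the equivalence relations transport as stated. I should also stress two points. Over a general base field the characteristic-free form of the Gr\"obner fan theorem must be used, which is available. And intersecting $\mathrm{GF}(\widehat{\J})$ with the cone $\sigma$ can create faces, lying on $\partial\sigma$, that are not themselves closures of $\sim$-classes; thus ``polyhedral fan'' in Theorem~\ref{GB} is to be read as the fan obtained by adjoining such faces to the class-closures. An alternative route avoids the presentation altogether: one develops standard-basis theory directly over $\rr$ --- monomial orders exist because $\sigma^{\vee}$ is strongly convex, and since for $v$ interior to $\sigma$ the order refining $v$ is local, Mora's tangent-cone algorithm is needed --- proves finiteness of $\{\In_v(\J):v\in\sigma\}$ by invariance of Hilbert functions, and describes each $\sim$-class explicitly as $\{v\in\sigma:\In_v(g)=\In_{v_0}(g)\text{ for all }g\text{ in a }v_0\text{-adapted standard basis}\}$; the remaining convex-geometric argument is identical.
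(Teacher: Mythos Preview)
Your proposal is correct and follows essentially the same route as the paper: both reduce to the polynomial-ring Gr\"obner fan via the presentation $\pi\colon\K[\y]\twoheadrightarrow\rr$ and the linear embedding $v\mapsto\omega(v)$, the crux being the compatibility $\In_{\omega(v)}(\pi^{-1}\J)=\pi^{-1}(\In_v\J)$, which the paper establishes as the combination of Propositions~\ref{torictrop} and~\ref{ideals} (via the explicit Lemma~\ref{valuation} and Proposition~\ref{maximum}) and which you obtain in one stroke from strictness of $\pi$ for the weight filtrations and passage to associated graded rings. The only notable difference is that you add a homogenization step so as to invoke the classical complete Gr\"obner fan of a homogeneous ideal in $\K[y_0,\dots,y_s]$, whereas the paper appeals directly to the (local) Gr\"obner fan of $J=\pi^{-1}\J$ on $\R^s_{\geq 0}$ as in \cite{MaRo,FJT,BaTa}.
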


The fan in theorem \ref{GB} is called the Groebner fan; it generalizes the well known Groebner fan of ideals in polynomial rings cite \cite{MaRo,As,FJT,BaTa}. Using \cite{FJT}, we actually have an algorithm to compute
this fan.\\

We will use this notion of Groebner fan to produce embedded resolutions of singularities of sub-varieties of toric varieties defined by ``non-degenerate ideals" which are defined as follows:

\begin{defi}\label{nndt}
Let $\J\subset \rr$ be an ideal. We assume that the closed orbit $O$ (which correspond to the cone $\sigma$) belongs to $V(\J).$  We say that $\J$ is Newton non-degenerate at $O$ if, for every $v\in \sigma,$ 
we have $$\mathrm{Sing}(V(\In_v(\J))\cap \textbf{T}=\emptyset;$$

\noindent here $\mathrm{Sing}(V(\In_v(\J))$ denotes the singular locus of the sub-variety of $X_\sigma$ defined by the ideal  $\In_v(\J).$

\end{defi}

Another main result of this article is that a non-degenerate ideal admits a toric embedded resolution in the following sense.

\begin{thm}\label{Main} Let $\J\subset \rr$ be a Newton non-degenerate  ideal and let $Y=V(\J)\subset X_\sigma.$ Let $\Sigma$ be a regular subdivision of $\sigma$ which is compatible with the Groebner fan of $\mathcal{J}.$
Then the associated toric morphism $\pi_\Sigma:X_\Sigma\longrightarrow X_\sigma$ is a proper birational morphism, in which irreducible components of the total transform $\pi_\Sigma^{-1}(Y)$ are smooth (in particular, the strict transform of $Y$ is smooth) and meet transversely.

\end{thm}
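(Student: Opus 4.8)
The plan is to dispose of the easy toric facts first and then to reduce the smoothness and transversality assertions, chart by chart and orbit by orbit, to the Newton non-degeneracy of the initial ideals $\In_v(\J)$. Since $\Sigma$ is a subdivision of $\sigma$, the toric morphism $\pi_\Sigma\colon X_\Sigma\to X_\sigma$ is proper and birational, being an isomorphism over the torus $\textbf{T}$; and since $\Sigma$ is regular, $X_\Sigma$ is smooth and its toric boundary $X_\Sigma\setminus\textbf{T}=\bigcup_\rho D_\rho$ is a simple normal crossings divisor, so each $D_\rho$ is smooth and they meet transversely. It then remains to analyse the strict transform $\widetilde Y$, that is, the closure of $Y\cap\textbf{T}$ in $X_\Sigma$, to show that $\widetilde Y$ is smooth, that $\widetilde Y$ together with the $D_\rho$ forms a simple normal crossings configuration, and that every remaining irreducible component of $\pi_\Sigma^{-1}(Y)$ is among the $D_\rho$ (or is handled by the same local analysis).

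Next I would localize. Fix a maximal cone $\tau=\langle v_1,\dots,v_n\rangle$ of $\Sigma$; regularity means $v_1,\dots,v_n$ is a $\Z$-basis of $N$, so $X_\tau\cong\spec\K[y_1,\dots,y_n]$ with $\x^\alpha\mapsto\prod_i y_i^{v_i\cdot\alpha}$ and all exponents $v_i\cdot\alpha\ge 0$ because $\tau\subseteq\sigma$. Using the finiteness of the Groebner fan (Theorem \ref{GB}), I choose a finite generating set $g_1,\dots,g_r$ of $\J$ that is a universal Groebner basis, i.e. $\In_v(\J)=(\In_v(g_1),\dots,\In_v(g_r))$ for all $v\in\sigma$, and that may be taken so that each cone of $\Sigma$ is contained in a single cone of the normal fan of $\mathrm{conv}(\mathrm{Supp}(g_j))$ for every $j$ --- this is the standard compatibility of Groebner bases with the Groebner fan. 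Writing $\pi_\Sigma^{*}g_j=\textbf{y}^{\textbf{b}_j}g_j^{\tau}$ with $g_j^{\tau}$ divisible by no $y_i$, the strict transform of $Y$ on $X_\tau$ is $V(g_1^{\tau},\dots,g_r^{\tau})$ and the total transform is $V(\textbf{y}^{\textbf{b}_1}g_1^{\tau},\dots,\textbf{y}^{\textbf{b}_r}g_r^{\tau})$.

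The heart of the argument is to identify the restriction of $\widetilde Y$ to each torus orbit of $X_\tau$ with an initial degeneration of $Y$. Let $\gamma=\langle v_i:i\in I\rangle$ be a face of $\tau$, let $O_\gamma\cong(\K^{*})^{\,n-|I|}$ be the corresponding orbit (the locus where $y_i=0$ exactly for $i\in I$), and set $v_\gamma=\sum_{i\in I}v_i\in\mathrm{relint}\,\gamma$. The compatibility above forces $v_\gamma$ to lie in the relative interior of the smallest cone of the normal fan of $\mathrm{conv}(\mathrm{Supp}(g_j))$ containing $\gamma$, whence $\mathrm{Supp}\big(\In_{v_\gamma}(g_j)\big)=\bigcap_{i\in I}\mathrm{Supp}\big(\In_{v_i}(g_j)\big)$ for every $j$; this makes the restriction of $g_j^{\tau}$ to $O_\gamma$ a monomial in the $y_i$ with $i\notin I$ (hence a unit on $O_\gamma$) times the natural image of $\In_{v_\gamma}(g_j)$. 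Consequently $\widetilde Y\cap O_\gamma$ is isomorphic to the quotient of $V(\In_{v_\gamma}(\J))\cap\textbf{T}$ by the subtorus of $\textbf{T}$ leaving $\In_{v_\gamma}(\J)$ invariant; this trace on $\textbf{T}$ has codimension $c$ equal to that of $Y\cap\textbf{T}$ (the Groebner degeneration being flat), and by Newton non-degeneracy (Definition \ref{nndt}) one has $\mathrm{Sing}\big(V(\In_{v_\gamma}(\J))\big)\cap\textbf{T}=\emptyset$, so $\widetilde Y\cap O_\gamma$ is smooth of codimension $c$ in $O_\gamma$. I expect this step --- the combinatorial identity of supports together with the bookkeeping of the scheme structure of the initial ideals near $\textbf{T}$ (in particular their genericity and reducedness there) --- to be the main obstacle; what precedes and what follows is standard toric geometry or a Jacobian computation.

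Finally I would globalize via the Jacobian criterion. At a point $p\in\widetilde Y\cap O_\gamma$ the coordinates $y_i$ with $i\notin I$ are units, so near $p$ the boundary of $X_\Sigma$ is $\{y_i=0:i\in I\}$; the $|I|$ differentials $dy_i$ $(i\in I)$ are independent, and by the previous step the $dg_j^{\tau}$, reduced modulo $(y_i:i\in I)$, span the conormal space of $\widetilde Y\cap O_\gamma$ in $O_\gamma$, of rank $c$. Hence $\{dy_i:i\in I\}\cup\{dg_j^{\tau}\}$ has rank at least $|I|+c$ at $p$; since $\widetilde Y$ is the closure of $Y\cap\textbf{T}$, none of its components lies in the boundary, so $\widetilde Y$ has codimension exactly $c$ near $p$, is therefore smooth at $p$, meets each $\{y_i=0\}$ transversely, and $\widetilde Y\cup\bigcup_{i\in I}\{y_i=0\}$ is simple normal crossings at $p$. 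One checks, again from the chosen Groebner basis, that near $p$ the total transform $\pi_\Sigma^{-1}(Y)$ is set-theoretically the union of $\widetilde Y$ with some of the coordinate hyperplanes $\{y_i=0\}$, $i\in I$; combined with the previous sentence this yields that every irreducible component of $\pi_\Sigma^{-1}(Y)$ is smooth and that they meet transversely, the extra components being exactly the toric divisors $D_\rho$ with $\min_{f\in\J}\nu_{v_\rho}(f)\ge 1$ (which may occur since $O\in V(\J)$). Letting $\tau$ range over the maximal cones of $\Sigma$ and $\gamma$ over their faces covers all of $X_\Sigma$, which would complete the proof.
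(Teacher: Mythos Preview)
Your approach is sound and, modulo the combinatorial bookkeeping you honestly flag as ``the main obstacle,'' it goes through: the identification of $\widetilde Y\cap O_\gamma$ with the image of $V(\In_{v_\gamma}(\J))\cap\textbf{T}$ under the quotient by the subtorus $T_\gamma$ (which acts because, by your compatibility hypothesis, each $\In_{v_\gamma}(g_j)$ is homogeneous for every $v_i$, $i\in I$) is correct, and the Jacobian argument that follows is standard. The existence of a generating set with the properties you demand follows from the finiteness of the Groebner fan (Theorem~\ref{GB}).

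However, your route is genuinely different from the paper's. The paper does \emph{not} work intrinsically on $X_\Sigma$; instead it reduces to the affine case already treated in \cite{AGS}. Concretely, it uses the embedding $X_\sigma\hookrightarrow\A^s$ induced by $\Psi$ and shows that $J:=\Psi^{-1}(\J)\subset\K[\y]$ is Newton non-degenerate in the classical sense: for $\omega\notin\phi(\sigma)=\tr(I_\sigma)$ the initial ideal $\In_\omega(J)$ contains a monomial (Proposition~\ref{torictrop}), while for $\omega=\phi(v)\in\phi(\sigma)$ one has $\Psi(\In_\omega(J))=\In_v(\J)$ (Proposition~\ref{ideals}), so the non-degeneracy of $\J$ transfers to $J$. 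Then \cite{AGS} gives an embedded resolution $Z_{\widetilde\Sigma}\to\A^s$ of $V(J)$; by Theorem~\ref{Groebner} the fan $\widetilde\Sigma$ restricts to $\Sigma$ on $\phi(\sigma)$, by \cite{GPT} the strict transform of $X_\sigma$ is $X_\Sigma$, and the strict transform of $V(\J)$ in $X_\Sigma$ coincides with that of $V(J)$ in $Z_{\widetilde\Sigma}$, whence the conclusion.

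What each approach buys: the paper's proof is short and leverages the structural results of the article (Propositions~\ref{torictrop} and~\ref{ideals}, Theorem~\ref{Groebner}) together with \cite{AGS} and \cite{GPT} as black boxes, at the cost of passing through the auxiliary ambient $\A^s$. Your proof is self-contained and intrinsic to $X_\Sigma$ --- it essentially re-proves the main result of \cite{AGS} in the toric setting rather than invoking it --- which is more illuminating geometrically but requires you to carry out the orbit-by-orbit Jacobian computation and the Groebner-basis bookkeeping yourself.
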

Note that the variety $X_\Sigma$ is smooth since it is associated with a regular fan. The main result of \cite{AGS} is the special case of theorem \ref{Main}, where $\sigma$ is considered to be $\R_+^n.$  In the special case where $Y$ is a hypersurface, a version of theorem \ref{Main} was proved in \cite{St}, see also \cite{NS}. The full generality of theorem \ref{Main} is used in \cite{MS2} (see also \cite{MS1}).

\begin{rmk} The embedded resolution of singularities in theorem \ref{Main} should be understood as an embedded resolution of singularities of the pair $Y\subset X_\sigma$ in a small neighbourhood of $O.$ 

\end{rmk}

\textbf{Acknowledgements.} Hussein Mourtada would like to thank the UNAM, Mexico City, for its hospitality during the preparation of this article. 

\section{The toric context}

We refer to \cite{Fu,Cox} for the basics on normal toric varieties. As in the intro-duction  $\K$ is a perfect field; $N$ is a free abelian group of rank $n,$ \textit{i.e.} $N\simeq \Z^n;$ $M\simeq \Z^n$ is the dual lattice of $N,$ 
$N=\mathrm{Hom}_{\textbf{Z}}(M,\textbf{Z})$ and $\sigma \subset N_{\R}=N \otimes_{\Z} \R\simeq \R^n$ is a strongly convex rational polyhedral cone (of dimension $n$) which is generated by $v^1,\ldots,v^r$, \textit{i.e.},
$$\sigma=\{\lambda_1v^1+\cdots+\lambda_rv^r; \lambda_i\in \R_{\geq 0}~~\mbox{for}~~i=1,\ldots,r \}.$$
We denote by $\sigma^{\vee} \subset M_{\R}=M \otimes_{\Z} \R\simeq \R^n$ the dual cone of $\sigma;$  the normal toric variety associated with $\sigma$ is by definition  $X_{\sigma}=\spec{\rr}$ where $\rr=\K[\sigma^{\vee}\!\cap M].$ Let $\alpha^{1},\ldots,\alpha^s$ be a minimal set of generators of $\sigma^{\vee}\!\cap M$ as a semigroup.  If we write 
$\K[M]=\K[x_1^{\pm 1},\ldots,x_n^{\pm 1}],$
then we have $$\rr=\K[\x^{\alpha^1},\ldots,\x^{\alpha^s}];$$ here we use the notation $\x^{\alpha^i}:=x_1^{\alpha^i_1}\cdots x_n^{\alpha^i_n}.$ This allows to embed $X_\sigma \hookrightarrow \A^s$ as follows: let $$\Psi:\K[y_1,\ldots,y_s]\longrightarrow \rr=\K[\x^{\alpha^1},\ldots,\x^{\alpha^s}]$$
be the  $\K-$algebra morphism defined by $\Psi(y_i)=\x^{\alpha^i}.$ Let 
$\mathcal{M}$ be the $n\times s$ matrix whose columns are the $\alpha^i\,$'s, \textit{i.e.} $(\M)_{k,l}=\alpha^l_k.$ For 
$\gamma \in \Z_{\geq 0}^s,$ we use the notation $\y^\gamma:=y_1^{\gamma_1}\cdots y_s^{\gamma_s};$ an element $h \in \K[\y]:=\K[y_1,\ldots,y_s]$ is expressed as follows:
$$h(\y)=\sum_{\gamma \in \Z_{\geq 0}^s}a_\gamma \y^{\gamma}=\sum_{\gamma \in \Supp(h)}a_\gamma \y^{\gamma},$$
where $a_\gamma \in \K$ is nonzero for a finite number of $\gamma.$
The morphism $\Psi,$ being a $\K$-algebra morphism, satisfies 
\begin{equation}\label{Psi}
\Psi (h)=\sum_{\gamma \in \Supp(h)}a_\gamma \x^{\M\gamma}.
\end{equation}

Notice that the morphism $\Psi$ is surjective
and hence the ideal $I_\sigma:=\mathrm{Ker}(\Psi)$ is the defining ideal of $X_\sigma$ in $\A^s.$ The affine space $\A^s$ has the structure of a toric variety which is associated with the cone $\R_+^s.$ We embed $\sigma$ in $\R_+^s$ via the following linear map:

$$\phi:\R^n\longrightarrow \textbf{R}^s,$$
$$v \mapsto \phi(v)=(v \cdot \alpha^1,\ldots, v \cdot \alpha^s).$$
The linear map $\phi$ is injective; indeed, since the vector space $\R^n$ is generated by $\alpha^1,\ldots,\alpha^s$ (the cone $\sigma$ being of full 
dimension and hence so is $\sigma^{\vee}$), the kernel of $\phi$ is reduced to $\{0\}.$ Note that since 
the $\alpha^i\,$'s are in $\sigma^{\vee},$ we have 
$$\phi(\sigma)\subset \R_+^s.$$

For $\omega \in \R_+^s, h \in \K[\y]$ and $J\subset \K[\y]$ an ideal, the objects $\nu_\omega(h), \In_\omega(h)$
and $\In_\omega(J)$ are definied in the same way as in the introduction; this is the special case $\sigma=\R_+^s.$\\

In the sequel, we will use the notation $v$ for a vector in a given $\sigma$ and $\omega$ for a vector in $\R_+^s;$
we will use the notation $\J$ for an ideal in $\rr$ and $J$ for an ideal in $\K[\y].$

\section{Proofs of the main results}

We keep the notations of the precedent sections. Let $v \in \sigma$ and set $\omega=\phi(v).$ Let $\J$ be an ideal of $\rr;$ then $J:=\Psi^{-1}(\J)$ is an  ideal of $\K[\y].$ In order to determine a relation between $\In_v(\J)$ and $\In_\omega(J),$
we begin by comparing, for $h \in \K[\y]$ and $f=\Psi(h);$ $\nu_w(h)$ with $\nu_v(f)$ and $\In_\omega(h)$ with $In_{v}(f).$

\begin{lemma}\label{valuation} Let $v \in \sigma$ and set $\omega=\phi(v).$ Let $h \in \K[\y]$ and $f=\Psi(h).$ We have 
$$\nu_\omega(h)\leq \nu_v(f).$$
Moreover, the inequality is strict if and only if $\Psi(\In_\omega(h))=0.$
\end{lemma}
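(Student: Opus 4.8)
The plan is to compute both sides directly from the definitions and the explicit formula \eqref{Psi} for $\Psi$. Write $h(\y)=\sum_{\gamma\in\Supp(h)}a_\gamma\y^\gamma$; then by \eqref{Psi} we have $f=\Psi(h)=\sum_{\gamma\in\Supp(h)}a_\gamma\x^{\M\gamma}$. The key observation is that for $\gamma\in\Z_{\geq 0}^s$ and $v\in\sigma$, the weight of the monomial $\y^\gamma$ with respect to $\omega=\phi(v)=(v\cdot\alpha^1,\dots,v\cdot\alpha^s)$ is
$$\omega\cdot\gamma=\sum_{i=1}^s(v\cdot\alpha^i)\gamma_i = v\cdot\Big(\sum_{i=1}^s\gamma_i\alpha^i\Big)=v\cdot(\M\gamma),$$
which is exactly the weight of the monomial $\x^{\M\gamma}$ appearing in $f$. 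So $\nu_\omega(h)=\min_{\gamma\in\Supp(h)}v\cdot(\M\gamma)$ while $\nu_v(f)=\min_{\alpha\in\Supp(f)}v\cdot\alpha$. Every exponent $\alpha$ occurring in $f$ is of the form $\M\gamma$ for some $\gamma\in\Supp(h)$, but the converse may fail: distinct $\gamma$'s can map to the same $\M\gamma$ (since $\M$, though of full row rank $n$, is typically not injective on $\Z_{\geq 0}^s$), and cancellation among the corresponding coefficients $a_\gamma$ can make that exponent disappear from $\Supp(f)$. Hence $\Supp(f)\subseteq\M(\Supp(h))$, and taking the minimum over the possibly smaller set gives $\nu_\omega(h)\leq\nu_v(f)$. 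This proves the inequality.

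For the equality case, first unwind what $\In_\omega(h)$ and $\In_v(f)$ are. By definition $\In_\omega(h)=\sum_{\gamma:\,v\cdot(\M\gamma)=\nu_\omega(h)}a_\gamma\y^\gamma$, so applying $\Psi$ and using \eqref{Psi} again,
$$\Psi(\In_\omega(h))=\sum_{\{\gamma\in\Supp(h)\,:\,v\cdot(\M\gamma)=\nu_\omega(h)\}}a_\gamma\,\x^{\M\gamma},$$
which is precisely the degree-$\nu_\omega(h)$ homogeneous part of $f$ with respect to the weight $v$. Now argue both directions. If $\nu_\omega(h)=\nu_v(f)$, then this degree-$\nu_\omega(h)$ part of $f$ is by definition $\In_v(f)\neq 0$ (as $f\neq 0$ whenever $h\notin\Ker\Psi$; if $f=0$ the statement is read off separately), so $\Psi(\In_\omega(h))=\In_v(f)\neq 0$. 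Conversely, if $\nu_\omega(h)<\nu_v(f)$, then no exponent $\alpha\in\Supp(f)$ satisfies $v\cdot\alpha=\nu_\omega(h)$, i.e. the weight-$\nu_\omega(h)$ part of $f$ vanishes; but that part is exactly $\Psi(\In_\omega(h))$, so $\Psi(\In_\omega(h))=0$. This gives the ``moreover'' clause.

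The only point requiring care — and the one I expect to be the main obstacle to write cleanly rather than mathematically — is keeping the two weight functions rigorously identified: one must be careful that ``$\alpha\in\Supp(f)$'' means the coefficient $\sum_{\gamma:\,\M\gamma=\alpha}a_\gamma$ is nonzero, not merely that some $\gamma$ with $\M\gamma=\alpha$ lies in $\Supp(h)$. Once the identity $\omega\cdot\gamma=v\cdot(\M\gamma)$ is in hand and $\Psi(\In_\omega(h))$ is recognized as the weight-$\nu_\omega(h)$ graded piece of $f$, everything else is bookkeeping. I would also remark in passing that this lemma is the technical heart of relating $\In_v(\J)$ to $\In_\omega(J)$ for $J=\Psi^{-1}(\J)$, which is presumably the next step after this lemma.
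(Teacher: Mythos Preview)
Your proof is correct and follows essentially the same approach as the paper: both hinge on the identity $\omega\cdot\gamma=v\cdot(\M\gamma)$ for monomials, then split $h$ (equivalently $f$) into its weight-$\nu_\omega(h)$ part and the rest, identifying $\Psi(\In_\omega(h))$ with the weight-$\nu_\omega(h)$ homogeneous piece of $f$ and reading off the inequality and its equality case from whether that piece vanishes. Your framing via $\Supp(f)\subseteq\M(\Supp(h))$ and ``minimum over a smaller set'' is a clean way to phrase the inequality, and your explicit caution about coefficient cancellation under $\M$ is exactly the point the paper flags after equation~(\ref{anh}); the paper additionally records the by-product $\Psi(\In_\omega(h))=\In_v(f)$ when equality holds (your equation as well), which is used later as formula~(\ref{Initials}).
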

\begin{proof} For a monomial $\y^\gamma=y_1^{\gamma_1}\cdots y_s^{\gamma_s},$
notice that

\begin{align}\nonumber
\nu_\omega(\y^\gamma)&=\omega \cdot\gamma=\sum_{i=1}^s\omega_i\gamma_i=\sum_{i=1}^s \left(\sum_{j=1}^nv_j\alpha^i_j\right)\gamma_i=\sum_{j=1}^nv_j\left(\sum_{i=1}^s\alpha^i_j\gamma_i\right)\\\label{mon} 
&=v\cdot (\M \gamma)=\nu_v(\x^{\M\gamma})=\nu_v(\Psi(\y^\gamma)).
\end{align}

\noindent If we write $$h=\sum_{\{\gamma \in \Supp(h);~~\omega \cdot \gamma = \nu_\omega(h)\}}a_\gamma \y^{\gamma}+\sum_{\{\gamma \in \Supp(h);~~\omega \cdot \gamma > \nu_\omega(h)\}}a_\gamma \y^{\gamma},$$
it follows from the formulas (\ref{Psi}) and (\ref{mon}) that 
$$f=\sum_{\{\gamma \in \Supp(h);~~v \cdot (\M\gamma) = \nu_\omega(h)\}}a_\gamma \x^{\M\gamma}+\sum_{\{\gamma \in \Supp(h);~~v \cdot (\M\gamma) > \nu_\omega(h)\}}a_\gamma \x^{\M\gamma}.$$

In particular, we have that $\nu_v(f)=\nu_w(h)$ if 
\begin{equation}\label{anh}
\sum_{\{\gamma \in \Supp(h);v \cdot \M\gamma = \nu_\omega(h)\}}a_\gamma \x^{\M\gamma}=\Psi(\In_\omega(h))\not=0,
\end{equation}

while the strict inequality $\nu_v(f)>\nu_w(h)$ holds if not; note that the equality in (\ref{anh}) may happen because the linear morphism defined by $\M$ is not injective in general.
Moreover, if $\nu_v(f)=\nu_w(h),$ we have 
\begin{equation}\label{Initials}
\Psi(\In_\omega(h))=\In_v(f)
\end{equation}
\end{proof}

Recall that the local tropical variety \cite{PPS} (see also \cite{trop}) at the origin ($O$ of $\A^s$) of an ideal $I\subset \K[\y]$
verifying $O \in V(I)$ is given by $$\tr(I):=\{\omega \in \R_+^s~~\mbox{such that}~~\In_\omega{I}~~ \mbox{does not contain monomials}\}.$$

We determine in the following proposition the local tropical variety of $I_\sigma \subset \K[\y];$ a result which is also of independent interest. We prove it here because it makes use of formula (\ref{mon}) that we have just showed.
\begin{prop}\label{torictrop}

\begin{enumerate}

\item Let $\omega \in \mbox{Trop(I)}.$ We have 
$$\In_\omega(I_\sigma)=I_\sigma.$$

\item We have $$ \phi(\sigma)= \tr(I_\sigma).$$

\end{enumerate}

\begin{proof}
\begin{enumerate}
\item By the main result of \cite{KVA}, to determine $\In_\omega(I_\sigma),$ it is sufficient to determine a Groebner basis of $I_\sigma \subset \K[\y]$ with respect to a total monomial order which refines the preorder (\textit{i.e.} partial order) defined by $\omega.$ Now, it is direct to see that a reduced Groebner basis of
a binomial ideal (which is the case for $I_\sigma$) is a binomial ideal \cite{ES}: this follows from the fact that the $S-$polynomial of two binomials is again a binomial. So, let $(h_1,\ldots,h_l)=I_\sigma$ be such a Groebner basis of $I_\sigma;$ then, the $h_i\,$'s are binomials  and $\In_\omega(I_\sigma)=\left(\In_\omega(h_1),\ldots,\In_\omega(h_l)\right)$ by \cite{KVA}. But since $\omega \in \mbox{Trop(I)},$ $\In_\omega(h_i)$ cannot be a monomial for $i=1,\ldots,l$; hence
we have $\In_\omega(h_i)=h_i,$ for $i=1,\ldots,l,$ since this latter is a binomial. We deduce that $\In_\omega(I_\sigma)=I_\sigma.$

\item It follows from formula (\ref{mon}) that $$\phi(\sigma)\subset \mbox{Trop}(I_\sigma),$$
where, recall that $I_\sigma=Ker(\Psi)$ is the defining ideal of $X_\sigma$ in $\A^s.$ Indeed, 
if $\omega=\phi(v) \in \phi(\sigma),$ since the $\alpha^i\,$'s are in $\sigma^{\vee},$ the components of $\omega$
are positive or zero. Moreover, a binomial $\y^\gamma - \y^{\gamma'} \in I_\sigma$ if and only if 
$\Psi(\y^\gamma)=\Psi(\y^{\gamma'}).$ 
Hence, by formula (\ref{mon}), we have 
$$\nu_\omega(\y^\gamma)=\nu_v(\Psi(\y^\gamma))=\nu_v\left(\Psi(\y^{\gamma'})\right)=\nu_\omega(\y^{\gamma'}); $$
we deduce that $\omega \in \mbox{Trop}(I_\sigma).$ Now, by what we have just said, $\omega$ belongs to $\mbox{Trop}(I_\sigma)$ if $\omega \in \R_+^s$ and $\omega \cdot \gamma = \omega \cdot \gamma' $ for every $\gamma$ and $\gamma'$
satisfying $\M\gamma=\M \gamma'.$ This implies that $\mbox{Trop}(I_\sigma)$ is the intersection of 
the orthogonal $(Ker(\M))^\perp$ of $Ker(\M)$ with $\R_+^s.$ Since the cone $\sigma^\vee$ is of maximal dimension,
we know that the vector space generated by the $\alpha^i\,$'s is of dimension $n;$ hence $\M$ is of rank $n,$ therefore $Ker(\M)$ is 
of dimension $s-n$ and $(Ker(\M))^\perp$ is of dimension $n.$ So, because we have proved that
$\phi(\sigma)\subset \mbox{Trop}(I_\sigma) \subset (Ker(\M))^\perp,$ the vector space generated by $\phi(\sigma)$
and $(Ker(\M))^\perp$ are equal; The intersection of this vector space with  $\R_+^s$ is
$\phi(\sigma)\subset \mbox{Trop}(I_\sigma).$

\end{enumerate}

\end{proof}

\end{prop}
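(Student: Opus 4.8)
The plan is to pull everything back to the ambient affine space $\A^s$, where $X_\sigma=V(I_\sigma)$ and $I_\sigma=\mathrm{Ker}(\Psi)$ is a \emph{toric} ideal: it is generated by binomials $\y^\gamma-\y^{\gamma'}$, and in fact $\y^\gamma-\y^{\gamma'}\in I_\sigma$ precisely when $\Psi(\y^\gamma)=\Psi(\y^{\gamma'})$, i.e. when $\M\gamma=\M\gamma'$ by formula (\ref{Psi}); moreover $I_\sigma$ contains no monomial, since $\Psi(\y^\delta)=\x^{\M\delta}\neq 0$. For part (1), fix $\omega\in\tr(I_\sigma)$ and choose a reduced Gröbner basis $(h_1,\dots,h_l)$ of $I_\sigma$ with respect to a term order refining the weight $\omega$. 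Because $S$-polynomials of binomials are binomials, such a basis may be taken to consist of binomials (cf. \cite{ES}), and by \cite{KVA} one has $\In_\omega(I_\sigma)=(\In_\omega(h_1),\dots,\In_\omega(h_l))$. For a binomial $h_i=\y^{\gamma}-\y^{\gamma'}$, the form $\In_\omega(h_i)$ is either $h_i$ itself (when $\omega\cdot\gamma=\omega\cdot\gamma'$) or a single monomial; since $\omega\in\tr(I_\sigma)$ forbids monomials in $\In_\omega(I_\sigma)$, each $\In_\omega(h_i)=h_i$, and hence $\In_\omega(I_\sigma)=(h_1,\dots,h_l)=I_\sigma$.

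For part (2), I would first prove $\phi(\sigma)\subseteq\tr(I_\sigma)$. If $\omega=\phi(v)$ with $v\in\sigma$, then the components $v\cdot\alpha^i$ of $\omega$ are $\geq 0$ because the $\alpha^i$ lie in $\sigma^\vee$, so $\omega\in\R_+^s$. Take a binomial Gröbner basis $(h_1,\dots,h_l)$ of $I_\sigma$ for a term order refining $\omega$; each $h_i=\y^\gamma-\y^{\gamma'}$ satisfies $\M\gamma=\M\gamma'$, so formula (\ref{mon}) gives $\nu_\omega(\y^\gamma)=\nu_v(\x^{\M\gamma})=\nu_v(\x^{\M\gamma'})=\nu_\omega(\y^{\gamma'})$, whence $\In_\omega(h_i)=h_i$. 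By \cite{KVA} this yields $\In_\omega(I_\sigma)=(h_1,\dots,h_l)=I_\sigma$, which contains no monomial; therefore $\omega\in\tr(I_\sigma)$.

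For the reverse inclusion, let $\omega\in\tr(I_\sigma)$. Applied to each binomial $\y^\gamma-\y^{\gamma'}\in I_\sigma$, the "no monomials" condition forces $\omega\cdot(\gamma-\gamma')=0$ whenever $\M\gamma=\M\gamma'$; since the integer vectors $\gamma-\gamma'$ of this shape span $\mathrm{Ker}(\M)$ over $\R$, we obtain $\omega\in\mathrm{Ker}(\M)^\perp$, which is exactly the row space of $\M$, i.e. $\phi(\R^n)$ (because $\phi(v)=v^{T}\M$). Writing $\omega=\phi(v)$, the condition $\omega\in\R_+^s$ says $v\cdot\alpha^i\geq 0$ for all $i$, hence $v\cdot\alpha\geq 0$ for every $\alpha$ in the semigroup generated by the $\alpha^i$, i.e. for every $\alpha\in\sigma^\vee$; by biduality $v\in(\sigma^\vee)^\vee=\sigma$, so $\omega\in\phi(\sigma)$. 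Combined with the previous inclusion this gives $\phi(\sigma)=\tr(I_\sigma)$.

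I expect the main obstacle to be less any single computation than the correct assembly of the structural inputs: that the initial ideal with respect to a weight vector is generated by the $\omega$-initial forms of a Gröbner basis for a refining term order (\cite{KVA}), and that for a binomial ideal this Gröbner basis can be taken binomial (\cite{ES}). Once these are in place, the linear-algebra points — that $\mathrm{Ker}(\M)\cap\Z^s$ spans $\mathrm{Ker}(\M)$ over $\R$, that $\mathrm{Ker}(\M)^\perp=\phi(\R^n)$, and the biduality $(\sigma^\vee)^\vee=\sigma$ — are routine.
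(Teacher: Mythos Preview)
Your proof is correct and follows essentially the same route as the paper: both parts hinge on taking a binomial Gr\"obner basis of $I_\sigma$ for a term order refining $\omega$ (via \cite{ES} and \cite{KVA}), and part~(2) reduces to the linear-algebra fact that $\tr(I_\sigma)=\mathrm{Ker}(\M)^\perp\cap\R_+^s$. Your reverse inclusion is in fact a bit more transparent than the paper's, since you identify $\mathrm{Ker}(\M)^\perp$ directly with the row space $\phi(\R^n)$ and invoke biduality $(\sigma^\vee)^\vee=\sigma$, whereas the paper reaches the same conclusion through a dimension count and then asserts the final equality $\phi(\R^n)\cap\R_+^s=\phi(\sigma)$ without spelling it out.
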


\begin{prop}\label{maximum} Let $v \in \sigma$ and set $\omega=\phi(v).$ Let $f\in \rr \setminus \{0\}.$
We have that $$\mbox{Max}\{\nu_\omega(h); \,h\in \Psi^{-1}(f) \}$$ is finite and for  $h\in \Psi^{-1}(f)$ realizing this finite maximum we have
$$\nu_{\omega}(h)=\nu_v(f) ~~~\mbox{and}~~~ \Psi(\In_\omega(h))=\In_v(f).$$

\end{prop}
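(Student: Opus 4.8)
The plan is to combine Lemma \ref{valuation} with an explicit construction of a preimage of $f$ that realizes the optimal valuation. First, since $\Psi$ is surjective the set $\Psi^{-1}(f)$ is nonempty, and Lemma \ref{valuation} gives $\nu_\omega(h)\leq\nu_v(f)$ for every $h\in\Psi^{-1}(f)$; as $f\neq 0$, the number $\nu_v(f)$ is finite, so $\mbox{Max}\{\nu_\omega(h);\,h\in\Psi^{-1}(f)\}$ is finite and at most $\nu_v(f)$ as soon as we know it is attained. It therefore suffices to produce one $h\in\Psi^{-1}(f)$ with $\nu_\omega(h)=\nu_v(f)$.

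To do so I would write $f=\sum_{\alpha\in\Supp(f)}a_\alpha\x^\alpha$ with $\alpha\in\sigma^{\vee}\!\cap M$, and, using that $\alpha^1,\ldots,\alpha^s$ generate $\sigma^{\vee}\!\cap M$ as a semigroup, choose for each $\alpha\in\Supp(f)$ some $\gamma(\alpha)\in\Z_{\geq 0}^s$ with $\M\gamma(\alpha)=\alpha$. Set $h:=\sum_{\alpha\in\Supp(f)}a_\alpha\,\y^{\gamma(\alpha)}$. By (\ref{Psi}) one gets $\Psi(h)=\sum_{\alpha}a_\alpha\x^{\M\gamma(\alpha)}=\sum_{\alpha}a_\alpha\x^\alpha=f$, hence $h\in\Psi^{-1}(f)$. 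The monomials $\y^{\gamma(\alpha)}$ are pairwise distinct, since $\gamma(\alpha)=\gamma(\alpha')$ forces $\alpha=\M\gamma(\alpha)=\M\gamma(\alpha')=\alpha'$; so there is no cancellation and $\Supp(h)=\{\gamma(\alpha);\,\alpha\in\Supp(f)\}$. By formula (\ref{mon}), $\omega\cdot\gamma(\alpha)=v\cdot(\M\gamma(\alpha))=v\cdot\alpha$ for each such $\alpha$, whence
$$\nu_\omega(h)=\min_{\alpha\in\Supp(f)}\omega\cdot\gamma(\alpha)=\min_{\alpha\in\Supp(f)}v\cdot\alpha=\nu_v(f).$$

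Putting the two steps together, the maximum is attained (at the $h$ just built) and equals the finite number $\nu_v(f)$. Finally, if $h\in\Psi^{-1}(f)$ is any preimage realizing this maximum, then $\nu_\omega(h)=\nu_v(f)$, and the remaining identity $\Psi(\In_\omega(h))=\In_v(f)$ is precisely the conclusion (\ref{Initials}) of Lemma \ref{valuation} in the equality case. The only delicate point is the no-cancellation remark that pins down $\nu_\omega(h)$ to be the minimum of the $v\cdot\alpha$; everything else is a direct assembly of Lemma \ref{valuation} and formula (\ref{mon}), and in particular this argument does not use Proposition \ref{torictrop}.
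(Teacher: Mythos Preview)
Your proof is correct, and it takes a genuinely different route from the paper's. The paper argues by contradiction: it picks a $g\in\Psi^{-1}(f)$ realizing the maximum and, assuming $\nu_\omega(g)<\nu_v(f)$, invokes Lemma~\ref{valuation} to get $\Psi(\In_\omega(g))=0$; then $g-\In_\omega(g)$ is still a preimage of $f$ but has strictly larger $\omega$-valuation, contradicting maximality. Your approach instead constructs an explicit optimal preimage by lifting each monomial $\x^\alpha$ of $f$ to some $\y^{\gamma(\alpha)}$ with $\M\gamma(\alpha)=\alpha$, and uses the no-cancellation observation together with formula~(\ref{mon}) to read off $\nu_\omega(h)=\nu_v(f)$ directly.

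Both arguments rest on Lemma~\ref{valuation} and formula~(\ref{mon}). Your construction has the advantage of settling the attainment of the maximum cleanly and in one stroke, whereas the paper's argument tacitly assumes a maximizer exists (this is harmless, since iterating the ``subtract the initial form'' step can only run through the finitely many values $\{\omega\cdot\gamma:\gamma\in\Supp(g)\}$, but your version avoids the issue altogether). The paper's contradiction argument, on the other hand, is slightly more economical in that it never needs to choose semigroup lifts of the exponents.
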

\begin{proof}
By lemma (\ref{valuation}), we have the fact that 
$$\mbox{Max}\{\nu_\omega(h);~~h\in \Psi^{-1}(f) \} \,\leq\; \nu_v(f);$$
hence this maximum is finite because $\nu_v(f)<\infty,$ for $f$ nonzero.
Let $g\in \Psi^{-1}(f)$ be such that $\nu_\omega(g)=\mbox{Max}\{\nu_\omega(h);~~h\in \Psi^{-1}(f) \}.$
Suppose (by contradiction) that $\nu_\omega(g)<\nu_v(f);$ by lemma (\ref{valuation}), this implies that $\Psi(\In_v(g))=0;$
hence $\Psi(g-\In_v(g))=f.$ At the same time we have 
$$ \nu_\omega(g-\In_\omega(g))>\nu_\omega(g);$$
this contradicts the fact that $\nu_\omega(g)=\mbox{Max}\{\nu_\omega(h);~~h\in \Psi^{-1}(f) \}.$ Hence
we have $\nu_\omega(g)=\nu_v(f);$ by (\ref{Initials}) this gives $\Psi(\In_\omega(g))=\In_v(f).$

\end{proof}

Let $\J$ be an ideal of $\rr;$ then $J:=\Psi^{-1}(\J)$ is an  ideal of $\K[\y].$ We are now ready to compare $\In_v(\J)$ and $\In_\omega(J)$ and deduce the structure of the Groebner fan of $\J$ from the Greobner fan of $J.$

\begin{prop}\label{ideals} Let $\J$ be an ideal of $\rr$ and let $J:=\Psi^{-1}(\J)$ be an ideal of $\K[\y].$ Let $v \in \sigma$ and let $\omega=\phi(v).$ We have

$$\In_v(\J)=\Psi(\In_\omega(J)).$$

\end{prop}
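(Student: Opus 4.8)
The plan is to prove the two inclusions $\In_v(\J)\subseteq\Psi(\In_\omega(J))$ and $\Psi(\In_\omega(J))\subseteq\In_v(\J)$ separately, using Proposition~\ref{maximum} as the main engine. Throughout, recall that $J=\Psi^{-1}(\J)$ means exactly that $h\in J\iff\Psi(h)\in\J$, and that $\Psi$ is surjective onto $\rr$.

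For the inclusion $\In_v(\J)\subseteq\Psi(\In_\omega(J))$, I would take a nonzero $f\in\J$ and its initial form $\In_v(f)$, which is a typical generator of $\In_v(\J)$. By surjectivity of $\Psi$, pick some preimage of $f$; then by Proposition~\ref{maximum} there exists $h\in\Psi^{-1}(f)$ realizing the finite maximum $\mathrm{Max}\{\nu_\omega(h');h'\in\Psi^{-1}(f)\}$, and for this $h$ we have $\nu_\omega(h)=\nu_v(f)$ and $\Psi(\In_\omega(h))=\In_v(f)$. Since $\Psi(h)=f\in\J$, we have $h\in J$, so $\In_\omega(h)\in\In_\omega(J)$, and therefore $\In_v(f)=\Psi(\In_\omega(h))\in\Psi(\In_\omega(J))$. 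This handles one direction cleanly.

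For the reverse inclusion $\Psi(\In_\omega(J))\subseteq\In_v(\J)$, I would take a nonzero $h\in J$ and show $\Psi(\In_\omega(h))\in\In_v(\J)$. Set $f=\Psi(h)\in\J$. There are two cases according to Lemma~\ref{valuation}: if $\Psi(\In_\omega(h))=0$ there is nothing to prove (the zero element lies in every ideal). If $\Psi(\In_\omega(h))\neq 0$, then by Lemma~\ref{valuation} we have $\nu_\omega(h)=\nu_v(f)$, and then by formula~(\ref{Initials}) we get $\Psi(\In_\omega(h))=\In_v(f)$, which lies in $\In_v(\J)$ since $f\in\J$. The only subtlety is to make sure that $\In_\omega(J)$ is genuinely generated (as a $\K$-vector space, hence as an ideal) by the elements $\In_\omega(h)$ for $h\in J$, so that applying $\Psi$ to it and checking membership on these generators suffices; this is immediate from the definition of the initial ideal as $\In_\omega(J)=\{\In_\omega(h);h\in J\}$ together with the fact that $\Psi$ is $\K$-linear and $\In_v(\J)$ is an ideal.

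**Main obstacle.**

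The one place that requires care is ensuring that the equality of \emph{ideals} (not just of sets of initial forms) really follows from the pointwise identities above. The containment $\In_v(\J)\subseteq\Psi(\In_\omega(J))$ needs every element of $\In_v(\J)$ — a priori an ideal-theoretic combination of initial forms $\In_v(f_i)$ — to be hit; but since $\In_v(\J)$ is by definition the \emph{set} $\{\In_v(f);f\in\J\}$ and this set is already an ideal, it is enough to hit each $\In_v(f)$, which the argument above does. Symmetrically, $\Psi(\In_\omega(J))$ is the image under the ring homomorphism $\Psi$ of the ideal $\In_\omega(J)$, hence an ideal of $\rr$, and $\In_\omega(J)=\{\In_\omega(h);h\in J\}$, so it suffices to show each $\Psi(\In_\omega(h))$ lies in $\In_v(\J)$. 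Thus no genuine obstruction arises beyond bookkeeping: Proposition~\ref{maximum} does all the real work in one direction, and Lemma~\ref{valuation} together with~(\ref{Initials}) does it in the other.
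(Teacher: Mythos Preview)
Your proof is correct and follows essentially the same approach as the paper: Proposition~\ref{maximum} for the inclusion $\In_v(\J)\subseteq\Psi(\In_\omega(J))$, and Lemma~\ref{valuation} together with formula~(\ref{Initials}) for the reverse inclusion. Your additional remarks on why elementwise verification suffices (given that both initial ideals are defined as sets of initial forms) are valid bookkeeping that the paper leaves implicit.
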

 \begin{proof}
 Let $F\in \In_v(\J),F\not=0;$ there exists $f\in \J$ such that $F=\In_v(f).$ Let $g\in \Psi^{-1}(f)$ be such that $\nu_\omega(g)=\mbox{Max}\{\nu_\omega(h);~~h\in \Psi^{-1}(f) \}.$ By proposition \ref{maximum}, we have $F=\Psi(\In_\omega(g));$ hence $F\in \Psi(\In_\omega(J))$ and we have the inclusion $\In_v(\J)\subset\Psi(\In_\omega(J)).$ \\
 
 Let $G\in \In_\omega(J);$ then $G=\In_\omega(g), g\in J.$ By lemma \ref{valuation}, we have either $\Psi(G)=0 \in \In_v(\J)$, or we have $\nu_\omega(g)=\nu_v(\Psi(g));$ and thus by (\ref{Initials}) this gives  $\Psi(G)=\In_v(\Psi(g))\in \In_v(\J).$ We deduce the reverse inclusion and the proposition.
\end{proof}

One of the two main results of this paper is the following:
\begin{thm}\label{Groebner} Let $v,v' \in \sigma$ and let $\omega=\phi(v),~\omega'=\phi(v').$ Let $\J \in \rr$ and let $J:=\Psi^{-1}(\J).$ We have 
\begin{equation}\label{fan}
\In_v(\J)=\In_{v'}(\J)~~\mbox{if and only if}~~ \In_\omega(J)=\In_{\omega'}(J).
\end{equation}

In particular the equivalence classes of the relation $\sim$ defines a polyhedral fan which is a subdivision of $\sigma$ and which is obtained from the Groebner fan
of $J$ intersected with $\sigma.$
\end{thm}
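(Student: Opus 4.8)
The plan is to derive Theorem~\ref{Groebner} from Proposition~\ref{ideals} together with Proposition~\ref{torictrop}, and then to transport the polyhedral-fan structure of the Groebner fan of $J$ (in the polynomial ring $\K[\y]$, which is the classical Groebner fan, known to be a polyhedral fan by \cite{MaRo,FJT,BaTa}) back to $\sigma$ via the linear isomorphism $\phi$ onto its image.

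First I would prove the equivalence \eqref{fan}. The implication ``$\In_\omega(J)=\In_{\omega'}(J)\Rightarrow\In_v(\J)=\In_{v'}(\J)$'' is immediate: apply $\Psi$ to both sides and use Proposition~\ref{ideals}, which gives $\In_v(\J)=\Psi(\In_\omega(J))=\Psi(\In_{\omega'}(J))=\In_{v'}(\J)$. The converse is the substantial direction. Here I would use that $J=\Psi^{-1}(\J)\supseteq I_\sigma$, so that $I_\sigma\subseteq J$ and consequently, for any $\omega$ in the positive orthant, $\In_\omega(I_\sigma)\subseteq\In_\omega(J)$. For $\omega=\phi(v)\in\phi(\sigma)=\tr(I_\sigma)$ (Proposition~\ref{torictrop}(2)), part~(1) of that proposition gives $\In_\omega(I_\sigma)=I_\sigma$, hence $I_\sigma\subseteq\In_\omega(J)$. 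Thus $\Psi^{-1}(\Psi(\In_\omega(J)))=\In_\omega(J)$, i.e.\ $\In_\omega(J)$ is recovered from $\Psi(\In_\omega(J))=\In_v(\J)$ as $\Psi^{-1}(\In_v(\J))$. Therefore if $\In_v(\J)=\In_{v'}(\J)$ then $\In_\omega(J)=\Psi^{-1}(\In_v(\J))=\Psi^{-1}(\In_{v'}(\J))=\In_{\omega'}(J)$, which is \eqref{fan}.

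I expect the main obstacle to be precisely the claim that $\In_\omega(J)$ contains $I_\sigma$ and hence is $\Psi$-saturated, so that $\Psi$ and $\Psi^{-1}$ set up a bijection between the relevant initial ideals on the two sides; this is where the hypothesis $v\in\sigma$ (equivalently $\omega\in\tr(I_\sigma)$) is genuinely used, and it is the only place the toric nature of the embedding enters. One should be a little careful that $\In_\omega$ commutes with the inclusion $I_\sigma\subseteq J$ only as an inclusion of initial ideals, not an equality, but the inclusion $I_\sigma=\In_\omega(I_\sigma)\subseteq\In_\omega(J)$ is all that is needed.

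Finally, for the ``in particular'' statement: the map $\omega\mapsto\In_\omega(J)$ is constant on the relative interiors of the cones of the classical Groebner fan $\mathrm{GF}(J)$ of $J$ in $\R_+^s$, and the closures of its level sets are exactly the cones of $\mathrm{GF}(J)$. By \eqref{fan}, two vectors $v,v'\in\sigma$ are $\sim$-equivalent if and only if $\phi(v),\phi(v')$ lie in the relative interior of the same cone of $\mathrm{GF}(J)$. Since $\phi$ is an injective linear map identifying $N_\R$ with the subspace $(\mathrm{Ker}\,\M)^\perp$ and carrying $\sigma$ onto $\phi(\sigma)=\tr(I_\sigma)$, the preimages under $\phi$ of the cones of $\mathrm{GF}(J)$ that meet $\phi(\sigma)$ form a polyhedral fan subdividing $\sigma$; intersecting $\mathrm{GF}(J)$ with the linear span of $\phi(\sigma)$ and pulling back by $\phi^{-1}$ gives exactly the closures of the $\sim$-classes. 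This both establishes that these closures form a polyhedral fan subdividing $\sigma$ (Theorem~\ref{GB}) and exhibits it as $\phi^{-1}\bigl(\mathrm{GF}(J)\cap\phi(\sigma)\bigr)$, i.e.\ the Groebner fan of $J$ intersected with $\sigma$.
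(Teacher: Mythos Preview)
Your proof is correct and follows essentially the same approach as the paper: both directions rest on Proposition~\ref{ideals}, and the nontrivial direction uses Proposition~\ref{torictrop} to conclude $I_\sigma=\ker\Psi\subseteq\In_\omega(J)$, which is exactly what makes $\In_\omega(J)$ recoverable from $\In_v(\J)$. Your formulation via $\Psi$-saturation (i.e.\ $\In_\omega(J)=\Psi^{-1}(\Psi(\In_\omega(J)))=\Psi^{-1}(\In_v(\J))$) is a clean repackaging of the paper's contrapositive argument, which establishes the same fact by chasing a single element $G_i\in\In_\omega(J)\setminus\In_{\omega'}(J)$.
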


\begin{proof} The reverse implication of $(\ref{fan})$ is a direct consequence of proposition \ref{ideals}. Let us prove the direct implication. For that, we suppose that $\In_\omega(J)\not=\In_{\omega'}(J).$ Since, $I_\sigma=Ker(\Psi) \subset J$, we have by proposition \ref{torictrop} (parts 1 and 2)  
$$ \In_\omega(I_\sigma)=I_\sigma \subset \In_\omega(J) ~~\mbox{and}~~\In_{\omega'}(I_\sigma)=I_\sigma \subset \In_{\omega'}(J).$$
So we can choose basis of the forms
$$ \In_\omega(I_\sigma)\subset \In_\omega(J)=(G_1,\ldots,G_l,H_1,\ldots,H_k)$$
$$ \In_{\omega'}(I_\sigma)\subset \In_{\omega'}(J)=(G'_1,\ldots,G'_{l'},H_1,\ldots,H_k),$$
where $I_\sigma=(H_1,\ldots,H_k).$ 

Now, since we assumed that $\In_\omega(J)\not=\In_{\omega'}(J),$ there exist an $i$ such that $G_i \not\in \In_{\omega'}(J).$
We have $\Psi(G_i)\not\in \Psi(\In_{\omega'}(J)):$ indeed, otherwise we will have $\Psi(G_i-G')=0$ for some $G'\in\In_{\omega'}(J);$ this implies that $G_i-G'=H\in I_\sigma$ and hence $G_i=G'+H \in \In_{\omega'}(J),$ which is a contradiction. We deduce, since $\Psi(G_i)\not\in \Psi(\In_{\omega'}(J))$, that  $\In_v(\J)\not=\In_{v'}(\J).$ So we have proved that
if $\In_\omega(J))\not=\In_{\omega'}(J))$ then $\In_v(\J)\not=\In_{v'}(\J);$ this ends the proof.

\end{proof}

\begin{rmk} We keep the notation of theorem \ref{Groebner}. Using the main result of \cite{FJT}, we know how to compute the Groebner fan of $J$ (\textit{i.e.} there is an algorithm for this); hence, applying theorem \ref{Groebner}, we obtain an algorithm to compute the (toric) Groebner fan of $\J.$ At the same time, in practice and for dimension reasons, it should be much easier to compute directly the (toric) Groebner fan of $\J$; indeed, in general $s$ is much greater than $n.$  
\end{rmk}

We now are ready to prove theorem \ref{Main}.

\begin{proof}[Proof of theorem \ref{Main}.]
For $\J \in \rr$ satisfying the Newton non-degeneracy property, we will prove that $J:=\Psi^{-1}(\J)\subset \K[\y]$ is Newton non-degenerate. For that, we need to prove that for any $\omega \in \R_+^s,$ the singular locus of $V(\In_\omega(J))\subset V((y_1\cdots y_s)).$
Thanks to proposition \ref{torictrop}, we only need to consider the case where $\omega \in \phi(\sigma)$; otherwise, for $\omega \not\in \phi(\sigma)=\tr(I_\sigma),$ since $I_\sigma \subset J,$ we have that  $\In_\omega(J)$ contains monomials and hence $V(\In_\omega(J))\subset V((y_1\cdots y_s)),$ in particular its singular locus is. Now, assume that $\omega \in \phi(\sigma);$ by proposition \ref{ideals}, we have that $\Psi(\In_\omega(J))=\In_v(\J);$ hence we have $$ V(\In_v(\J))=V(\In_\omega(J))\subset X_\sigma \subset \A^s.$$ We hence know, by the Newton non-degeneracy condition defined in \ref{nndt} that the singular locus is included in the complement of the torus $(\K^*)^n$ in $X_\sigma.$   So we need to prove that the complement of the torus $(\K^*)^n$ in $X_\sigma$ is sent by the embedding $X_\sigma\subset \A^s$ to the complement of the torus $(\K^*)^s.$ Note that the embedding $(\K^*)^n \hookrightarrow (\K^*)^s$ is given by

$$(t_1,\ldots,t_n) \mapsto (\textbf{t}^{\alpha^1},\ldots,\textbf{t}^{\alpha^s}),$$
where $\textbf{t}^{\alpha^i}=t_1^{\alpha^i_1}\cdots t_n^{\alpha^i_n}.$ Now, we know (\textit{e.g.} section 3.2 in \cite{Cox}) that 
$$ X_\sigma \setminus (\K^*)^n=\bigsqcup_{0 \not=\tau \subseteq \sigma} \mathbb{O}_\tau,$$ where $\mathbb{O}_\tau$ is 
the orbit associated to $\tau;$ note that the orbit $\mathbb{O}_0$ is equal to $(\K^*)^n.$ Then, by the definition of $\mathbb{O}_\tau$ we know that if $\alpha^i \not\in \tau^{\perp},$ any point $p$ which is in $\mathbb{O}_\tau$  will belong to $V(y_i)\subset \A^s.$ Hence, we deduce that $J$ is Newton non-degenerate as an ideal in  $\K[\y].$  
By the main theorem of \cite{AGS}, we know that any regular subdivision $\widetilde{\Sigma}$ of the Groebner fan of $J$ induces
a proper birational map $Z_{\widetilde{\Sigma}}\longrightarrow \A^s$ (where $Z_{\widetilde{\Sigma}}$ is the toric variety associated 
with the fan $\widetilde{\Sigma}$), which is an embedded resolution of $V(Y)\subset \K^s.$ Now, by theorem \ref{Groebner}, the fan $\tilde{\Sigma}$ induces a 
regular subdivision of the (toric) Greobner fan of $\J$ (actually, one can go the other way and construct $\tilde{\Sigma}$ from a regular subdivision  of the -toric- Groebner fan of $\J).$ The morphism $Z_{\widetilde{\Sigma}}\longrightarrow \A^s$ is an embedded resolution of $X_\sigma \subset \A^s $ (see  \cite{GPT}) and the strict transform of $X_\sigma$ is $X_\Sigma.$ The strict transform of $V(\J)\subset X_\sigma $ in $X_{\Sigma}$ is the same
as the strict transform $Y'$ of $V(J)\subset \A^s$ in $Z_{\widetilde{\Sigma}}.$ In particular, this strict transform is smooth, and since it is transverse to the irreducible components of the exceptional locus of $Z_{\widetilde{\Sigma}}\longrightarrow \A^s$, we deduce that $Y'$ is transverse to the exceptional locus of $X_\Sigma \longrightarrow X_\sigma$, which is nothing but the transverse intersection of the exceptional locus of $Z_{\widetilde{\Sigma}}$ with $X_\Sigma$ and the theorem follows.

\end{proof}
\bibliographystyle{amsrefs}
\bibliography{}

\begin{biblist}

\bib{Surf}{book}{
   author={Abhyankar, Shreeram S.},
   title={Resolution of singularities of embedded algebraic surfaces},
   series={Springer Monographs in Mathematics},
   edition={2},
   publisher={Springer-Verlag, Berlin},
   date={1998},
   pages={xii+312},
}

 \bib{ATW}{article}{
   author={Abramovich, Dan},
   author={Temkin, Michael },
   author={Wlodarczyk, Jaroslaw},
   title={Functorial embedded resolution via weighted blowings up },
 year={2019},
    eprint={1906.07106},
    archivePrefix={arXiv},
    primaryClass={.math.AG},
    }

\bib{AGL}{article}{
   author={Aroca, Fuensanta},
   author={G\'{o}mez-Morales, Mirna},
   author={Le\'{o}n-Cardenal, Edwin},
   title={On Archimedean zeta functions and Newton polyhedra},
   journal={J. Math. Anal. Appl.},
   volume={473},
   date={2019},
   number={2},
   pages={1215--1233},
   
}

\bib{AGS}{article}{
AUTHOR = { Fuensanta  Aroca  and  Mirna   G\'{o}mez-Morales  and  Khurram Shabbir
             },
     TITLE = {Torical modification of {N}ewton non-degenerate ideals},
   JOURNAL = {Rev. R. Acad. Cienc. Exactas F\'{\i}s. Nat. Ser. A Mat. RACSAM},
  FJOURNAL = {Revista de la Real Academia de Ciencias Exactas, F\'{\i}sicas y
              Naturales. Serie A. Matematicas. RACSAM},
    VOLUME = {107},
      YEAR = {2013},
    NUMBER = {1},
     PAGES = {221--239},
}

\bib{As}{article}{
   author={Assi, Abdallah},
   title={Standard bases, critical tropisms and flatness},
   journal={Appl. Algebra Engrg. Comm. Comput.},
   volume={4},
   date={1993},
   number={3},
   pages={197--215},
}
\bib{BaTa}{article}{
   author={Bahloul, Rouchdi},
   author={Takayama, Nobuki},
   title={Local Gr\"{o}bner fans},
   
   journal={C. R. Math. Acad. Sci. Paris},
   volume={344},
   date={2007},
   number={3},
   pages={147--152},
  
}

\bib{BV}{article}{
   author={Benito, Ang\'{e}lica},
   author={Villamayor U., Orlando E.},
   title={Techniques for the study of singularities with applications to
   resolution of 2-dimensional schemes},
   journal={Math. Ann.},
   volume={353},
   date={2012},
   number={3},
   pages={1037--1068},}

   \bib{BM1}{article}{
   author={Bierstone, Edward},
   author={Milman, Pierre D.},
   title={Canonical desingularization in characteristic zero by blowing up
   the maximum strata of a local invariant},
   journal={Invent. Math.},
   volume={128},
   date={1997},
   number={2},
   pages={207--302},}

     \bib{CP}{article}{
AUTHOR = {Vincent Cossart and Olivier Piltant},
     TITLE = {Resolution of singularities of arithmetical threefolds},
   JOURNAL = {J. Algebra},
  FJOURNAL = {Journal of Algebra},
    VOLUME = {529},
      YEAR = {2019},
     PAGES = {268--535},
     }
\bib{CP2}{article}{
   author={Cossart, Vincent},
   author={Piltant, Olivier},
   title={Resolution of singularities of threefolds in positive
   characteristic. II},
   journal={J. Algebra},
   volume={321},
   date={2009},
   number={7},
   pages={1836--1976},}

\bib{Cox}{book}{
AUTHOR = {David A. Cox and John B. Little and  Henry K. Schenck },
     TITLE = {Toric varieties},
    SERIES = {Graduate Studies in Mathematics},
    VOLUME = {124},
 PUBLISHER = {American Mathematical Society, Providence, RI},
      YEAR = {2011},
     PAGES = {xxiv+841},
     }
     
\bib{CPPT}{article}{
author={Maria Angelica Cueto and Patrick Popescu-Pampu and Dmitry Stepanov},
      title={Local tropicalizations of splice type surface singularities}, 
     
      year={2021},
      eprint={https://arxiv.org/abs/2108.05912},
      
}

\bib{Cudim3}{article}{
   author={Cutkosky, Steven Dale},
   title={Resolution of singularities for 3-folds in positive
   characteristic},
   journal={Amer. J. Math.},
   volume={131},
   date={2009},
   number={1},
   pages={59--127},  
}
\bib{CuM}{article}{

 author={Cutkosky, Steven Dale},
   author={Mourtada, Hussein},
   
     title={Defect and local uniformization},
   journal={Rev. R. Acad. Cienc. Exactas F\'{\i}s. Nat. Ser. A Mat. RACSAM},
   date={2019},
   pages={16 pages},
   }

\bib{dFGPM}{book}{

 author={de Felipe, Ana Bel\'en},
   author={Gonz\'alez P\'erez, Pedro},
   author={Mourtada, Hussein},
     TITLE = {Resolving singularities of reducible curves with one toric morphism},
 PUBLISHER = {Submitted},
     }

   \bib{DL2}{article}{
   author={Denef, Jan},
   author={Loeser, Fran\c{c}ois},
   title={Motivic Igusa zeta functions},
   journal={J. Algebraic Geom.},
   volume={7},
   date={1998},
   number={3},
   pages={505--537},
   }
   
   \bib{ES}{article}{
   author={Eisenbud, David},
   author={Sturmfels, Bernd},
   title={Binomial ideals},
   journal={Duke Math. J.},
   volume={84},
   date={1996},
   number={1},
   pages={1--45},
   }
   
   \bib{EH}{article}{
   author={Encinas, Santiago},
   author={Hauser, Herwig},
   title={Strong resolution of singularities in characteristic zero},
   journal={Comment. Math. Helv.},
   volume={77},
   date={2002},
   number={4},
   pages={821--845},
   }
   
   \bib{FJT}{article}{
   author={Fukuda, Komei},
   author={Jensen, Anders N.},
   author={Thomas, Rekha R.},
   title={Computing Gr\"{o}bner fans},
   journal={Math. Comp.},
   volume={76},
   date={2007},
   number={260},
   pages={2189--2212},
  
}

\bib{Fu}{book}{     
 AUTHOR = {Fulton, William},
     TITLE = {Introduction to toric varieties},
    SERIES = {Annals of Mathematics Studies},
    VOLUME = {131},
      NOTE = {The William H. Roever Lectures in Geometry},
 PUBLISHER = {Princeton University Press, Princeton, NJ},
      YEAR = {1993}, 
      
      }

      \bib{GT}{article}{
   author={Goldin, Rebecca},
   author={Teissier, Bernard},
   title={Resolving singularities of plane analytic branches with one toric
   morphism},
   conference={
      title={Resolution of singularities},
      address={Obergurgl},
      date={1997},
   },
   book={
      series={Progr. Math.},
      volume={181},
      publisher={Birkh\"{a}user, Basel},
   },
   date={2000},
   pages={315--340},}
   
      \bib{GP1}{article}{
   author={Gonz\'{a}lez P\'{e}rez, Pedro D.},
   title={Toric embedded resolutions of quasi-ordinary hypersurface
   singularities},
   journal={Ann. Inst. Fourier (Grenoble)},
   volume={53},
   date={2003},
   number={6},
   pages={1819--1881},}
   
   \bib{GPT}{article}{
   author={Gonz\'{a}lez P\'{e}rez, Pedro Daniel},
   author={Teissier, Bernard},
   title={Embedded resolutions of non necessarily normal affine toric
   varieties},
   journal={C. R. Math. Acad. Sci. Paris},
   volume={334},
   date={2002},
   number={5},
   pages={379--382},
   
}
   
   \bib{Hi}{article}{
   author={Hironaka, Heisuke},
   title={Resolution of singularities of an algebraic variety over a field
   of characteristic zero. I, II},
   journal={Ann. of Math. (2) {\bf 79} (1964), 109--203; ibid. (2)},
   volume={79},
   date={1964},
   pages={205--326}, }
   
 \bib{KVA}{article}{
   author={Kemper, Gregor},
   author={Viet Trung, Ngo},
   author={Anh, Nguyen Thi Van},
   title={Toward a theory of monomial preorders},
   journal={Math. Comp.},
   volume={87},
   date={2018},
   number={313},
   pages={2513--2537},
   
}

\bib{KFV}{article}{
   author={Knaf, Hagen},
   author={Kuhlmann, Franz-Viktor},
   title={Abhyankar places admit local uniformization in any characteristic},
   journal={Ann. Sci. \'{E}cole Norm. Sup. (4)},
   volume={38},
   date={2005},
   number={6},
   pages={833--846},
}

   \bib{Kou}{article}{
   author={Kouchnirenko, A. G.},
   title={Poly\`edres de Newton et nombres de Milnor},
   journal={Invent. Math.},
   volume={32},
   date={1976},
   number={1},
   pages={1--31},
}

\bib{LMR}{article}{
   author={Lejeune-Jalabert, Monique},
   author={Mourtada, Hussein},
   author={Reguera, Ana},
   title={Jet schemes and minimal embedded desingularization of plane
   branches},
   journal={Rev. R. Acad. Cienc. Exactas F\'{\i}s. Nat. Ser. A Mat. RACSAM},
   volume={107},
   date={2013},
   number={1},
   pages={145--157},
   }

 \bib{LMS}{article}{
   author={Leyton-Alvarez, Maximiliano},
   author={Mourtada, Hussein},
   author={Spivakovsky, Mark},

   title={Newton non-degenerate mu-constant deformations admit simultaneous embedded resolutions },
   
   journal={Submitted},
  
  }

\bib{trop}{book}{
 AUTHOR = {Diane Maclagan and Bernd Sturmfels},
     TITLE = {Introduction to tropical geometry},
    SERIES = {Graduate Studies in Mathematics},
    VOLUME = {161},
 PUBLISHER = {American Mathematical Society, Providence, RI},
      YEAR = {2015},
     PAGES = {xii+363},
}

\bib{Mc}{article}{
   author={McQuillan, Michael},
   title={Very functorial, very fast, and very easy resolution of
   singularities},
   journal={Geom. Funct. Anal.},
   volume={30},
   date={2020},
   number={3},
   pages={858--909},
}

\bib{MaRo}{article}{
   author={Mora, Teo},
   author={Robbiano, Lorenzo},
   title={The Gr\"{o}bner fan of an ideal},
   note={Computational aspects of commutative algebra},
   journal={J. Symbolic Comput.},
   volume={6},
   date={1988},
   number={2-3},
   pages={183--208},
   
}

\bib{gen}{article}{
   author={Mourtada, Hussein},
   title={Jet schemes and generating sequences of divisorial valuations in
   dimension two},
   journal={Michigan Math. J.},
   volume={66},
   date={2017},
   number={1},
   pages={155--174},
   }
	
\bib{hdr}{book}{
   author={Mourtada, Hussein},
   title={Approches g\'eom\'etriques de la r\'esolution des singularit\'es et des partitions des nombres entiers},
   series={M\'emoire d'habilitation \`a diriger des recherches},

   publisher={Universit\'e de Paris},
   date={2020},		}
   
   \bib{RDP}{article}{
   author={Mourtada, Hussein},
   title={Jet schemes of rational double point singularities},
   conference={
      title={Valuation theory in interaction},
   },
   book={
      series={EMS Ser. Congr. Rep.},
      publisher={Eur. Math. Soc., Z\"{u}rich},
   },
   date={2014},
   pages={373--388},
  
}
   
\bib{MP}{article}{
   author={Mourtada, Hussein},
   author={Pl\'{e}nat, Camille},
   title={Jet schemes and minimal toric embedded resolutions of rational
   double point singularities},
   journal={Comm. Algebra},
   volume={46},
   date={2018},
   number={3},
   pages={1314--1332},  }

\bib{MS1}{article}{
AUTHOR = {Mourtada, Hussein, and Bernd Schober},
     TITLE = {Teissier singularities: a viewpoint on quasi-ordinary singularities in
positive characteristics},
   JOURNAL = {Oberwolfach Reports},
  FJOURNAL = {Oberwolfach Reports},
    VOLUME = {6},
      YEAR = {2019},
    }
    \bib{MS2}{article}{
AUTHOR = {Mourtada, Hussein, and Bernd Schober},
     TITLE = {On the notion of quasi-ordinary singularities in positive characteristics: Teissier singularities and their resolutions},
   JOURNAL = {Preprint},
    }
    
 \bib{NS}{article}{
 author={Andr\'as N\'emethi and Baldur Sigurosson},
      title={Local Newton nondegenerate Weil divisors in toric varieties}, 
      
      year={2021},
      eprint={https://arxiv.org/abs/2102.02948},
      
}

\bib{NoS}{article}{
   author={Novacoski, Josnei},
   author={Spivakovsky, Mark},
   title={Reduction of local uniformization to the case of rank one
   valuations for rings with zero divisors},
   journal={Michigan Math. J.},
   volume={66},
   date={2017},
   number={2},
   pages={277--293},}
   
   \bib{Oka}{book}{
   author={Oka, Mutsuo},
   title={Non-degenerate complete intersection singularity},
   series={Actualit\'{e}s Math\'{e}matiques}, publisher={Hermann, Paris},
   date={1997},
   pages={viii+309},}

   \bib{PPS}{article}{
   author={Popescu-Pampu, Patrick},
   author={Stepanov, Dmitry},
   title={Local tropicalization},
   conference={
      title={Algebraic and combinatorial aspects of tropical geometry},
   },
   book={
      series={Contemp. Math.},
      volume={589},
      publisher={Amer. Math. Soc., Providence, RI},
   },
   date={2013},
   pages={253--316},
}

 \bib{Sa}{article}{
   author={San Saturnino, Jean-Christophe},
   title={Defect of an extension, key polynomials and local uniformization},
   journal={J. Algebra},
   volume={481},
   date={2017},
   pages={91--119},}  
  
  \bib{St}{article}{
   author={Steenbrink, J. H. M.},
   title={Motivic Milnor fibre for nondegenerate function germs on toric
   singularities},
   conference={
      title={Bridging algebra, geometry, and topology},
   },
   book={
      series={Springer Proc. Math. Stat.},
      volume={96},
      publisher={Springer, Cham},
   },
   date={2014},
   pages={255--267},
   
}

\bib{T1}{article}{
   author={Teissier, Bernard},
   title={Overweight deformations of affine toric varieties and local
   uniformization},
   conference={
      title={Valuation theory in interaction},
   },
   book={
      series={EMS Ser. Congr. Rep.},
      publisher={Eur. Math. Soc., Z\"{u}rich},
   },
   date={2014},
   pages={474--565},
      
      }

\bib{Te1}{article}
{

AUTHOR = {Tevelev, Jenia},
     TITLE = {Compactifications of subvarieties of tori},
   JOURNAL = {Amer. J. Math.},
  FJOURNAL = {American Journal of Mathematics},
    VOLUME = {129},
      YEAR = {2007},
    NUMBER = {4},
     PAGES = {1087--1104},
}
\bib{Tev14}{article}{
   author={Tevelev, Jenia},
   title={On a question of B. Teissier},
   journal={Collect. Math.},
   volume={65},
   date={2014},
   number={1},
   pages={61--66},
}

\bib{Va}{article}
{

AUTHOR = {Varchenko, A. N.},
     TITLE = {Zeta-function of monodromy and {N}ewton's diagram},
   JOURNAL = {Invent. Math.},
  FJOURNAL = {Inventiones Mathematicae},
    VOLUME = {37},
      YEAR = {1976},
    NUMBER = {3},
     PAGES = {253--262},
}

\bib{V}{article}{
   author={Villamayor, Orlando},
   title={Constructiveness of Hironaka's resolution},
   journal={Ann. Sci. \'{E}cole Norm. Sup. (4)},
   volume={22},
   date={1989},
   number={1},
   pages={1--32},
   }
\end{biblist}

[Fuensanta Aroca] Universidad Nacional Aut\'onoma de M\'exico, Unidad Cuernavaca, A.P. 273-3, C.P. 62251, Cuernavaca,
MOR, Mexico.\\
fuen@matcuer.unam.mx\\

[Mirna G\'omez-Morales] Universidad Nacional Aut\'onoma de M\'exico, Unidad Cuernavaca, A.P. 273-3, C.P. 62251, Cuernavaca,
MOR, Mexico.\\
fm.gomez@im.unam.mx\\

[Hussein Mourtada] Universit\'e de Paris, Sorbonne Universit\'e, CNRS, Institut de Math\'ematiques de Jussieu-
Paris Rive Gauche, F-75013 Paris, France.\\
hussein.mourtada@imj-prg.fr

\end{document}